\def\N{\mathbb N}
\def\Z{\mathbb Z}
\def\R{\mathbb R}
\def\Q{\mathbb Q}
\def\C{\mathbb C}
\def\A{\mathcal A}
\def\B{\mathcal B}
\def\int{\operatorname{int}}
\def\Per{\operatorname{Per}}
\def\per{\operatorname{Per}_\A(\beta)}
\def\pfz{\begin{proof}}
\def\pfk{\end{proof}}
\newtheorem{theorem}{Theorem}[section]
\newtheorem{proposition}[theorem]{Proposition}
\newtheorem{corollary}[theorem]{Corollary}
\newtheorem{lemma}[theorem]{Lemma}
\newtheorem{definition}[theorem]{Definition}
\newtheorem{example}[theorem]{Example}
\newtheorem{remark}[theorem]{Remark}
\title {Periodic representations in algebraic bases}
\author{V\'\i t\v ezslav Kala}
\address{Charles University, Faculty of Mathematics and Physics, Department of Algebra, Sokolov\-sk\' a 83, 18600 Praha~8, Czech Republic}
\address{University of G\" ottingen, Mathematisches Institut, Bunsenstr.~3-5, D-37073 G\"ottingen, Germany}
\email{vita.kala@gmail.com}
\author{Tom\'a\v s V\'avra}
\address{Charles University, Faculty of Mathematics and Physics, Department of Algebra, Sokolov\-sk\' a 83, 18600 Praha~8, Czech Republic}
\email{vavrato@gmail.com}
\date{\today}
\thanks{The authors were supported by Czech Science Foundation GA\v CR, grant 17-04703Y}
\begin{document}
\begin{abstract}
We study periodic representations in number systems with an algebraic base $\beta$ (not a rational integer). We show that if $\beta$ has no Galois conjugate on the unit circle,
then there exists a finite integer alphabet $\mathcal A$ such that every element of $\Q(\beta)$ admits an eventually periodic representation with base $\beta$ and digits in $\mathcal A$.
\end{abstract}
\maketitle
\allowdisplaybreaks
\section{Introduction}
A well known result by Schmidt~\cite{Schmidt} states that if $\beta>1$ is a Pisot number, then the set of numbers
with eventually periodic (greedy) $\beta$-expansions equals precisely to $\Q(\beta).$ On the other hand,
the only bases allowing eventually periodic $\beta$-expansions of $\Q(\beta)$ are the Pisot and Salem numbers.
However, no Salem base has been proved to posses this property.

In~\cite{BMPV}, Baker, Mas\'akov\'a, Pelantov\'a, and the second author studied the $(\beta,\A)$-representations, i.e., the expressions of the form $\sum_{k\geq-L}^{+\infty} a_k\beta^{-k},$ $a_k\in\A$
 without the greedy condition.
One of the problems studied in~\cite{BMPV} was the following: Given an algebraic base $\beta\in\C$, $|\beta|>1$, is there a finite alphabet of digits $\A\subset\Q(\beta)$,
such that
\begin{equation}\label{de:per}
\Q(\beta)=\per :=\big\{ \sum_{k\geq-L}^{+\infty} a_k\beta^{-k}\ : \ a_k\in\A,\ (a_k)_{k\geq -L} \text{ is eventually periodic}\big\}.
\end{equation}
This property indeed holds for the Pisot numbers, and their complex analogy. Our main result is
 the following theorem.
\begin{theorem}\label{thm:intro}
  Let $\beta$ be an algebraic number such that $|\beta|>1$, and let $|\beta'|\neq 1$ for each Galois conjugate $\beta'$ of $\beta$. Then
  there exists $\A\subset\Z$ finite such that $\per=\Q(\beta).$
\end{theorem}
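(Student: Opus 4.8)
The inclusion $\per\subseteq\Q(\beta)$ is immediate: since $\A\subset\Z\subset\Q(\beta)$ and an eventually periodic series $\sum_{k\ge -L}a_k\beta^{-k}$ sums to a rational function of $\beta$, every periodic representation has value in $\Q(\beta)$. The whole content is therefore the reverse inclusion, i.e.\ producing, with one fixed finite alphabet, an eventually periodic $(\beta,\A)$-representation of \emph{every} element. The plan is to set this up as a digit-by-digit algorithm and to read off eventual periodicity from a pigeonhole argument. First I would use two reductions. Multiplying a representation by $\beta^{\pm1}$ only shifts the digit string, so $\per$ is invariant under multiplication by $\beta$ and $\beta^{-1}$; hence it suffices to represent, for each $y$, a shifted element by a series supported on nonnegative indices, $y=\sum_{j\ge0}b_j\beta^{-j}$ with $b_j\in\A$. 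Setting $r_0=y$ and $r_{j+1}=\beta(r_j-b_j)$, this equality holds precisely when the remainders $r_j$ do not grow like $|\beta|^{j}$, so the task becomes: choose digits from a fixed $\A$ keeping the sequence $(r_j)$ bounded.

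To control the remainders in all directions at once, I would embed $K=\Q(\beta)$ into the product $X=\prod_{v\in S}K_v$ of its completions over the set $S$ of archimedean places together with the finite places at which $|\beta|_v\ne1$ (those dividing the leading or constant coefficient of the integer minimal polynomial). By construction multiplication by $\beta$ is an automorphism of $X$ whose local scaling factors are all $\ne1$: at archimedean places this is exactly the hypothesis that no Galois conjugate lies on the unit circle, and at the finite places it holds by the definition of $S$. Thus $X$ splits $\beta$-equivariantly as $X=X^{+}\oplus X^{-}$, with $\beta$ expanding on $X^{+}$ and contracting on $X^{-}$. Moreover $\Lambda=\Z[\beta,\beta^{-1}]$ is a cocompact lattice in $X$ (this is why the finite places are needed when $\beta$ is not an algebraic integer), and for each positive integer $m$ the commensurable set $\tfrac1m\Lambda$ is again a lattice. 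Since $r_j\in\beta^{j}y+\Lambda$, the remainders produced for $y=\gamma/m$ all lie in the single lattice $\tfrac1m\Lambda$; hence, once $(r_j)$ is bounded, it meets that lattice in a finite set, and the state sequence, and so the digit sequence, is eventually periodic. This mechanism delivers periodicity and, crucially, already absorbs the problem of denominators: no separate device for dividing by $m$ is required.

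The heart of the matter is thus the boundedness of $(r_j)$ with digits from one fixed finite $\A$. On the contracting part $X^{-}$ boundedness is automatic once the digits are bounded. On $X^{+}$ one must choose $b_j$ so that $r_j-b_j$ is small, and this is the only real difficulty. When $\beta$ is Pisot (or complex Pisot), $X^{+}$ is a single real (or complex) coordinate, the integers sit densely enough there, and plain rounding works, recovering the classical case. In general $\dim X^{+}>1$, the digits $b_j\in\Z$ land on the diagonal of $X^{+}$, which is \emph{not} dense, and no single digit can make all expanding coordinates small simultaneously; coordinatewise greediness fails. The key lemma I would prove to get around this is a self-covering statement: there exist a finite $\A\subset\Z$ and a bounded region $\mathcal D\subset X$ with $\mathcal D\subseteq\bigcup_{a\in\A}\bigl(a+\beta^{-1}\mathcal D\bigr)$, so that whenever $r_j\in\mathcal D$ some admissible digit keeps $r_{j+1}\in\mathcal D$. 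What makes this possible, despite the diagonal not being dense, is that the correcting set is not the diagonal alone but the whole lattice $\Lambda$ reachable through $\beta$-shifted digit blocks, and $\Lambda$ \emph{does} project densely onto $X^{+}$ (because $\Lambda\cap X^{-}=\{0\}$ and $\Lambda$ has more generators than $\dim X^{+}$); a finite block of digits can therefore correct all expanding coordinates at once while the contraction on $X^{-}$ keeps the accumulated error controlled. Establishing this relative-density/covering fact, equivalently building a bounded fundamental domain invariant under the hyperbolic map, is the main obstacle, and I expect it to be the technical core.

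Granting the covering lemma, the argument closes quickly. The same relative density of bounded-digit combinations lets us first strip off a finite "integer part" $\pi=\sum_{k=-L}^{-1}a_k\beta^{-k}\in\Lambda$, with $a_k\in\A$, chosen so that $r_0:=y-\pi$ lands in $\mathcal D$. The covering lemma then supplies digits $b_j\in\A$ with every $r_j\in\mathcal D$; boundedness yields $r_0=\sum_{j\ge0}b_j\beta^{-j}$, hence $y=\pi+\sum_{j\ge0}b_j\beta^{-j}$, and finiteness of $\mathcal D\cap\tfrac1m\Lambda$ forces $(b_j)$ to be eventually periodic. Therefore $y\in\per$ for a single finite $\A$, giving $\per=\Q(\beta)$ as required.
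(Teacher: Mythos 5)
Your dynamical setup is a genuinely different route from the paper's (an adelized version of Schmidt's pigeonhole argument), and several pieces of it are sound: the shift-invariance reduction, the observation that the remainders of $y=\gamma/m$ stay in a fixed discrete set so that boundedness plus pigeonhole yields eventual periodicity, and the fact that contraction makes $X^-$ harmless. The genuine gap is the covering lemma, which you rightly call the technical core but then support with a heuristic aimed at the wrong obstruction. Your justification is purely archimedean: $\Lambda=\Z[\beta,\beta^{-1}]$ projects relatively densely onto $X^+$, so blocks of digits should correct the expanding coordinates. But the corrections actually available to the algorithm are not arbitrary elements of $\Lambda$; within a block of length $t$ they are sums $\sum_{k<t}a_k\beta^{-k}$ with $a_k\in\A\subset\Z$, and at an expanding \emph{non-archimedean} place $v$ --- such places exist precisely when $\beta$ is not an algebraic integer, which is exactly the case this theorem adds beyond Theorem~25 of \cite{BMPV} --- these sums all satisfy $|\cdot|_v\le 1$, and the ultrametric leaves no notion of ``correcting up to a small error'': to keep $|r_{j+1}|_v$ bounded the digit must satisfy an exact congruence $b_j\equiv r_j$ modulo a power of the maximal ideal. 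A single integer digit only realizes residues in the image of $\Z$ (the prime field $\mathbb F_p$ inside the residue field, which is a proper subfield whenever $v$ has residue degree $>1$), and block sums only realize residues in the closure of $\Z[\beta^{-1}]$, in general a proper open subring of the valuation ring. Whether the residues of the actual remainders can always be matched by such digits is a number-theoretic question about $\beta$ and $n$, not a density question about the archimedean part of $X^+$; your sketch never engages with it, and ironically the finite places you introduced to make $\Lambda$ a lattice are the very places where the density heuristic breaks down.

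That question is precisely what the paper's proof is about. Its engine is Theorem~\ref{thm:main}, a Fermat-type congruence ($\beta^i-\beta^j\in n\Z[\beta]$ for every $n$), whose proof requires the coprime case (Theorem~\ref{theo:coprime}), Lemma~\ref{lem:minpol} on leading coefficients of powers of $\beta$, and an induction on the degree; Proposition~\ref{prop:main} then converts this congruence into an eventually periodic representation of $\tfrac1n$ by summing a geometric series, and the parallel addition algorithm (Corollary~\ref{c:1}) plays the role your covering lemma was meant to play, reducing everything to one fixed finite alphabet via ${\rm Fin}_\A(\beta)\cdot{\rm Per}_\A(\beta)\subset{\rm Per}_\A(\beta)$ and ${\rm Fin}_\A(\beta)=\Z[\beta,\beta^{-1}]$. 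So your reduction is not false, but the lemma you have reduced to concentrates the entire difficulty of the theorem, and the evidence you offer for it would apply just as well in the previously known cases (e.g.\ $\beta$ an algebraic integer, where $X^+$ is purely archimedean) while missing exactly what is new here. Two smaller points: discreteness and cocompactness of $\Z[\beta,\beta^{-1}]$ in $X$ are asserted rather than proved --- discreteness follows from $\Z[\beta,\beta^{-1}]\subseteq\mathcal O_{K,S}$, but for cocompactness you should work with the $S$-integers themselves, since finiteness of the index $[\mathcal O_{K,S}:\Lambda]$ is not obvious; and eventual periodicity needs either a deterministic digit-selection rule or the explicit stipulation that once a remainder repeats you repeat the subsequent digits.
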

Note that this is a stronger version of Theorem~25 of \cite{BMPV}. An additional condition was required there, namely that
$\tfrac1a\in\Z[\beta,\beta^{-1}]$, where $a$ is the leading coefficient of the minimal polynomial of $\beta$ over $\Z$.
This additional condition is not typically satisfied; the full classification of such bases is also given in~\cite{BMPV}.

We will see that our result follows from the fact that $\tfrac1n\in\per$ for all $n\in\N$.
To obtain this statement we first prove a generalized form of the Fermat's little theorem. Namely, for any $\beta$ algebraic, and for any $n\in\N$,
we show the existence of $i,j\in\Z$ such that
$\beta^i-\beta^j \in n\Z[\beta]$, see Theorem~\ref{thm:main}. This is surprisingly non-trivial for an algebraic non-integer
base.

An important role in our proofs is played by the parallel addition algorithms, see~\cite{FrPeSv}. We use those algorithms
for the reduction of the alphabet of representations while preserving periodicity.
This approach fails if $\beta$ has a conjugate on the unit circle where we cannot make use of
parallel addition algorithms.
Nevertheless, we will show that $\tfrac1n\in\per$ for all $n\in\N$ even in these cases, see Section~\ref{sec:salem}.
We are unable, however, to use our technique to extend the periodicity to the whole $\Q(\beta)$.

The technique used to prove the main result is rather non-constructive. However, we provide a tool
allowing the computation of a $(\beta,\A)$-representation for any element of $\Q(\beta)$ with certain alphabet of digits. These results are contained in Section~\ref{sec:graphs}.

\section{Preliminaries}
Let us provide precise definitions of the basic notations mentioned in the introduction.
\begin{definition}
Let $\beta\in\C$, $|\beta|>1$, and let $\A\subset\C$ be a finite set containing $0$. An expression
\[
x = \sum_{k= -L}^{+\infty} a_k\beta^{-k},\quad a_i\in\A
\]
is called a $(\beta,\A)$-representation of $x$.
\end{definition}

One of the possible constructions of $(\beta,\A)$-representation is the following one given by Thurston in~\cite{thurston}.
Assume we have a set $V\subset\C$ such that $\beta V \subseteq\bigcup_{a\in\A}(V+a),$ then all the elements of $V$ have
a $(\beta,\A)$-representation of the form $\sum_{k=1}^{+\infty} a_k\beta^{-k}$. Moreover, as a consequence,
every $x\in\bigcup_{n\in\N} \beta^{n}V$ has a $(\beta,\A)$-representation. Here we can see that if
$0\in\operatorname{int}(V)$, then there exist $(\beta,\A)$-representations for all the real (or complex) numbers. 
It has been shown that the assumption of each element of $\Q(\beta)$
having an eventually periodic $(\beta,\A)$-representation arising
from such a construction is very restrictive. In particular,
if $\beta\in\R$, then necessarily $|\beta|$ is a Pisot number, see~\cite{BMPV}.
More on finding $(\beta,\A)$-representations can be found for example in~\cite{renyi, ItoSadahiro,AkiSch,DarKat, HaFuIt}.

Given two $(\beta,\A)$-representations, one can study their behaviour under elementary arithmetic operations.
In~\cite{FrHePeSv,FrPeSv}, the authors proved that if $\beta$ has no conjugates on the unit circle, then there exists
$\A\subset \Z$ such that $(\beta,\A)$-representations allow a parallel addition algorithm defined as follows.

\begin{definition}
For a base $\beta\in\C$, $|\beta|>1$, and an alphabet $\A\subset\C$, denote $\B=\A+\A$. We say that $(\beta,\A)$ allows parallel addition if there exist $t,r\in\N$ and $\Phi:\B^{t+r+1}\to\A$ such that
\begin{itemize}
\item $\Phi(0^{t+r+1})=0$;
\item For every $x=\sum_{k\in\Z}x_k\beta^{-k}$ with $x_k=0$ for $k< L$ for some $L$ and $x_k\in\B$, it holds that
$x=\sum_{k\in\Z}z_k\beta^{-k}$, where $z_k=\Phi(x_{k-t}\cdots x_kx_{k+1}\cdots x_{k+r})\in\A$.
\end{itemize}
\end{definition}

\begin{theorem}[\cite{FrPeSv},\cite{FrHePeSv}]
Let $\beta\in\C$, $|\beta|>1$. Then there exists an alphabet $\A\subset\C$ such that $(\beta,\A)$ allows parallel addition if and only if $\beta$ is an algebraic number and $|\beta'|\neq 1$ for every conjugate $\beta'$ of $\beta$.
Moreover, we can choose $\A = \{-M,\dots, 0,\dots ,M\}\subset\Z.$
\end{theorem}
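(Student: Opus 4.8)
The plan is to prove the two implications separately, keeping in mind that a single quantity — how far a ``carry'' must propagate when rewriting digits — governs both directions. For the necessity I would show that locality of $\Phi$ forces $\beta$ to be algebraic and forbids conjugates of modulus $1$; for the sufficiency I would construct $\Phi$ by hand for a symmetric alphabet $\A=\{-M,\dots,M\}$ with $M$ large, exploiting the spectral gap that the hypothesis provides. Algebraicity is the easy half. I would feed $\Phi$ the input having a single nonzero digit $b\in\B$ at position $0$ and $0$ elsewhere. Away from position $0$ the window $x_{k-t}\cdots x_{k+r}$ is the all-zero word, so $\Phi(0^{t+r+1})=0$ forces the output there to vanish; hence only $z_{-r},\dots,z_{t}$ can be nonzero, and the defining identity collapses to $b=\sum_{k=-r}^{t}z_k\beta^{-k}$ with $z_k\in\A$. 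Choosing $b\in\B$ not reproduced by $\Phi$ — which exists for any genuine alphabet, since $\A+\A\subseteq\A$ is impossible for a finite $\A$ with a nonzero element — gives a nontrivial linear relation among $1,\beta^{-1},\dots,\beta^{r+t}$ with digit coefficients. For $\A\subset\Z$ (the case of interest, to which the general one reduces) this is a nonzero integer Laurent polynomial vanishing at $\beta$, so $\beta$ is algebraic.

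The delicate half of the necessity, and the step I expect to be the main obstacle, is ruling out a conjugate on the unit circle. Suppose $\sigma\colon\Q(\beta)\to\C$ is an embedding with $\gamma:=\sigma(\beta)$ of modulus $1$. The mechanism of parallel addition is that the difference stream, consisting of bounded integer difference-digits $e_k:=x_k-z_k$, is a finitely generated representation of zero, $\sum_k e_k\beta^{-k}=0$, which $\Phi$ produces from purely local data; locality forces the carry implicit in these null representations to be absorbed within a window of fixed width $t+r$. Applying $\sigma$ turns this into $\sum_k e_k\gamma^{-k}=0$, an identity in which no term decays, since $|\gamma^{-k}|=1$ for all $k$. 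I would use this non-decay to build, for every fixed width $C$, a null representation over the (bounded) difference alphabet whose accumulated weight cannot be cancelled within any window of width $C$: intuitively, on the unit circle a bounded carry can neither disperse nor accumulate, so it can never be resolved locally, contradicting the existence of $\Phi$. Turning this heuristic into a genuine quantitative bound is the crux, and it is exactly here that the hypothesis $|\gamma|\neq1$ is consumed.

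For the converse I would build $\Phi$ explicitly. Partition the conjugates of $\beta$ (including $\beta$ itself) into the expanding set $\{\beta':|\beta'|>1\}$ and the contracting set $\{\beta':|\beta'|<1\}$; the hypothesis guarantees no conjugate lies on the circle, so these two sets exhaust the conjugates and are bounded away from modulus $1$ by a fixed gap. Taking $\A=\{-M,\dots,M\}$ with $M$ large (depending on the conjugates and on the minimal polynomial), I would reduce an arbitrary digit string over $\B=\{-2M,\dots,2M\}$ to $\A$ in two local phases, using a relation coming from the minimal polynomial to trade one oversized digit for small corrections at neighboring positions. The first phase pushes the excess toward the less significant positions, a move that is strictly contracting in every expanding embedding; the second phase pushes excess toward the more significant positions, strictly contracting in every contracting embedding. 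Because of the spectral gap, the carry in each phase decays geometrically in the relevant embeddings, so after a bounded number of positions the accumulated correction is forced back inside $\A$; this bounds the window $[-t,r]$ and defines $\Phi$, with $\Phi(0^{t+r+1})=0$ holding by construction.

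Finally I would verify that the two phases compose into a single sliding-window function with values in $\A$, and that they preserve $\sum_k x_k\beta^{-k}$ in $\Q(\beta)$ (equivalently, in every embedding at once). The conceptual heart of both directions is the same dichotomy: a carry can be dissolved by a bounded local rule exactly when it is a strict contraction in every Galois embedding, and this holds in all embeddings simultaneously precisely when no conjugate of $\beta$ sits on the unit circle.
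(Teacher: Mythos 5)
This statement is not proved in the paper at all: it is quoted as a known result from \cite{FrPeSv} and \cite{FrHePeSv}, so the only question is whether your proposal would constitute a proof. It would not. The one part that is genuinely complete is the algebraicity argument: feeding $\Phi$ a single spike $b\in\B\setminus\A$ (which exists, as you note, since a finite $\A$ with a nonzero element cannot satisfy $\A+\A\subseteq\A$) and using $\Phi(0^{t+r+1})=0$ to localize the output does yield a nontrivial relation $b=\sum_{k=-r}^{t}z_k\beta^{-k}$, nontrivial because $z_0\in\A$ while $b\notin\A$. Even here, though, this only shows $\beta$ is algebraic over the field generated by the alphabet; for a general $\A\subset\C$ the reduction to $\Q$ that the theorem asserts is waved away with ``to which the general one reduces.''

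The two hard directions are left as heuristics, and you say so yourself. For the necessity of having no conjugate on the unit circle, the entire content of the theorem is precisely the ``quantitative bound'' you defer: you must exhibit, for every window width $t+r$, an input whose image under the embedding $\sigma$ cannot be corrected locally, and the intuition that ``a bounded carry can neither disperse nor accumulate'' on the unit circle is not an argument --- indeed your own phrasing (``it is exactly here that the hypothesis $|\gamma|\neq1$ is consumed'') inverts the logical direction, since in this implication $|\gamma|=1$ is the assumption to be contradicted, not a hypothesis to be used. For the sufficiency, the known construction hinges on a concrete lemma you never state or prove: because no conjugate lies on the unit circle, there exists an integer polynomial multiple $R(x)$ of the minimal polynomial whose central coefficient strictly dominates the sum of the absolute values of all its other coefficients; subtracting suitable integer multiples of shifts of the identity $R(\beta)=0$ is the single local correction step that defines $\Phi$ and bounds the window. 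Your two-phase scheme (pushing excess separately toward more and less significant positions, contracting in expanding and contracting embeddings respectively) is not accompanied by any rewriting rule, any termination bound, or any verification that the result lands in $\A$, so there is no way to check it even in outline. As it stands the proposal establishes only the easiest fragment of the statement.
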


We can thus see a parallel addition algorithm as an algorithm that reduces representations over some large (but finite) alphabet into a $(\beta,\A)$-representations using only a
bounded neighbourhood of each digit. Therefore it rewrites finite (eventually periodic) representations over $\Z$ into finite (eventually periodic) $(\beta,\A)$-representations. The following
statement is thus an easy corollary.

\begin{corollary}[\cite{BMPV}]\label{c:1}
Let $\beta\in\C$, $|\beta|>1$, and let $\A$ be a symmetric alphabet such that $(\beta,\A)$ allows parallel addition. Let $\per$ be as in~\eqref{de:per}, and let
\[
{\rm Fin}_\A(\beta) = \big\{\sum_{k\in I} a_k\beta^{-k} :  a_k\in\A,\ I\subset\Z\text{ is finite}\big\}.
\]
Then
\begin{enumerate}



\item\label{en:3} ${\rm Fin}_\A(\beta)\cdot{\rm Per}_\A(\beta)\subset {\rm Per}_\A(\beta)$;

\item\label{en:4} ${\rm Fin}_\A(\beta) = \Z[\beta,\beta^{-1}]$.
\end{enumerate}
\end{corollary}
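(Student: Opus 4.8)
The plan is to establish two structural properties of the sets ${\rm Fin}_\A(\beta)$ and $\per$ — that each is an abelian group stable under multiplication by powers of $\beta$ — and then read off both assertions.

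First I would show that parallel addition makes both sets closed under $+$. Given two $(\beta,\A)$-representations, of $x$ and of $y$, I would align their digit strings (padding the one with larger starting index by zeros) and add them digit-wise; since $0\in\A=-\A$, the result is a representation of $x+y$ with digits in $\B=\A+\A$ that vanishes below some index $L$. Applying the recoding map $\Phi$ then returns a genuine $(\beta,\A)$-representation of $x+y$. Because $\Phi$ is a \emph{sliding window} of fixed width $t+r+1$, it sends a finite digit string to a finite one and an eventually periodic string to an eventually periodic one; hence $x+y$ lies in ${\rm Fin}_\A(\beta)$ (resp.\ $\per$) whenever $x$ and $y$ do. Closure under negation is immediate from the symmetry $\A=-\A$, by negating every digit, so both sets are subgroups of $(\C,+)$. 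Finally, multiplying $\sum_k a_k\beta^{-k}$ by $\beta^m$ merely shifts the digit string by $m$ places, preserving finiteness and eventual periodicity; thus both sets are stable under multiplication by $\beta^m$ for every $m\in\Z$.

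With this in hand, (4) is quick. The inclusion ${\rm Fin}_\A(\beta)\subseteq\Z[\beta,\beta^{-1}]$ is clear, as each $a_k\beta^{-k}$ with $a_k\in\A\subset\Z$ lies in $\Z[\beta,\beta^{-1}]$. For the reverse inclusion, note that $1\in\A$ (the alphabet produced by the parallel-addition theorem is $\{-M,\dots,M\}$), so every power $\beta^m=1\cdot\beta^m$ lies in ${\rm Fin}_\A(\beta)$; as ${\rm Fin}_\A(\beta)$ is a group containing all integer powers of $\beta$, it contains the group these powers generate, which is exactly $\Z[\beta,\beta^{-1}]$. For (3), take $f\in{\rm Fin}_\A(\beta)$ and $p\in\per$. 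By (4) write $f=\sum_{k\in I}c_k\beta^k$ with $I\subset\Z$ finite and $c_k\in\Z$. For each $k$ the shift $\beta^k p\in\per$, and then $c_k\beta^k p\in\per$ since it is a sum of $|c_k|$ copies of $\pm\beta^k p$ and $\per$ is a group. Summing the finitely many terms gives $fp=\sum_{k}c_k\beta^k p\in\per$.

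The only genuinely substantive point — the reason all of this is a formal consequence of the parallel-addition theorem — is the claim that $\Phi$ preserves eventual periodicity. I would verify it in two elementary steps: the digit-wise sum of two eventually periodic sequences is eventually periodic (with eventual period the least common multiple of the two), and a fixed-width sliding-window recoding of an eventually periodic sequence is again eventually periodic. Neither is difficult, which is why the statement is indeed an easy corollary, but making the periodicity bookkeeping explicit is the step I would be most careful about.
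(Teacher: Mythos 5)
Your proof is correct and takes essentially the same route as the paper, which presents this statement (citing [BMPV]) as an immediate consequence of the observation that parallel addition is a fixed-width sliding-window recoding and therefore rewrites finite (resp.\ eventually periodic) representations over a larger alphabet into finite (resp.\ eventually periodic) $(\beta,\A)$-representations. Your write-up simply makes explicit the bookkeeping --- closure of ${\rm Fin}_\A(\beta)$ and ${\rm Per}_\A(\beta)$ under addition, negation, and shifts --- that the paper dismisses as ``an easy corollary.''
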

In the rest of the text we denote by $\Z_n$ the factorring $\Z/ n\Z$.
\section{The main theorem}
The following proposition was one of the ingredients used in the proof of Theorem~25 of \cite{BMPV}. Since the proposition appeared as a part of a proof, we include it here for completeness.

\begin{proposition}\label{prop:main}
Let $\beta>1$ have no conjugate on the unit circle. Then the existence of $\A$ such that $\tfrac1n\in{\rm Per}_\A(\beta)$
is equivalent to the existence of $i>j\in\Z$ such that $\beta^i-\beta^j\in n\Z[\beta].$
\end{proposition}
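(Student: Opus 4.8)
The plan is to prove both implications of the equivalence, fixing once and for all a symmetric integer alphabet $\A=\{-M,\dots,M\}\subset\Z$ with $M\geq 1$ that allows parallel addition; such an alphabet exists by the cited characterization of parallel addition, since $\beta$ is algebraic with no conjugate on the unit circle. For this alphabet Corollary~\ref{c:1} supplies the two tools I will rely on: the identity ${\rm Fin}_\A(\beta)=\Z[\beta,\beta^{-1}]$ and the closure property ${\rm Fin}_\A(\beta)\cdot\per\subseteq\per$. Throughout I read the alphabets occurring in the statement as finite subsets of $\Z$ (equivalently of $\Z[\beta,\beta^{-1}]$); this is the hypothesis that makes the reverse implication meaningful, since for an arbitrary complex alphabet the arithmetic conclusion would be false.

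For the implication from the arithmetic condition to periodicity, suppose $\beta^i-\beta^j\in n\Z[\beta]$ with $i>j$, and set $m=i-j>0$. Writing $\beta^i-\beta^j=\beta^j(\beta^m-1)=n\gamma$ with $\gamma\in\Z[\beta]$ and dividing by $\beta^j$ shows $\tfrac{\beta^m-1}{n}=\gamma\beta^{-j}\in\Z[\beta,\beta^{-1}]={\rm Fin}_\A(\beta)$. On the other hand, the geometric series
\[
\frac{1}{\beta^m-1}=\sum_{k=1}^{+\infty}\beta^{-km}
\]
is a purely periodic $(\beta,\{0,1\})$-representation, so $\tfrac{1}{\beta^m-1}\in{\rm Per}_{\{0,1\}}(\beta)\subseteq\per$ because $\{0,1\}\subseteq\A$. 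Factoring $\tfrac1n=\tfrac{\beta^m-1}{n}\cdot\tfrac{1}{\beta^m-1}$ and invoking the closure property of Corollary~\ref{c:1} then gives $\tfrac1n\in\per$.

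For the converse, assume $\tfrac1n=\sum_{k=-L}^{+\infty}a_k\beta^{-k}$ with $a_k\in\A\subset\Z$ and $(a_k)$ eventually periodic, say $a_{k+q}=a_k$ for all $k>p$ with period length $q\geq 1$. Splitting off the preperiodic block and summing the periodic tail as a geometric series yields
\[
\frac1n=\underbrace{\sum_{k=-L}^{p}a_k\beta^{-k}}_{F}+\frac{1}{1-\beta^{-q}}\underbrace{\sum_{r=1}^{q}a_{p+r}\beta^{-(p+r)}}_{c},
\]
where $F,c\in\Z[\beta,\beta^{-1}]$ because the digits are integers. Multiplying through by $1-\beta^{-q}$ gives $\tfrac{1-\beta^{-q}}{n}=F(1-\beta^{-q})+c\in\Z[\beta,\beta^{-1}]$, and multiplying by $\beta^q$ turns this into $\tfrac{\beta^q-1}{n}\in\Z[\beta,\beta^{-1}]$. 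Finally, clearing all negative powers of $\beta$ by a sufficiently high power $\beta^s$ produces $\tfrac{\beta^{q+s}-\beta^s}{n}\in\Z[\beta]$, that is, $\beta^{q+s}-\beta^s\in n\Z[\beta]$ with $i=q+s>j=s$, as desired.

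The genuinely nontrivial input is concentrated in the forward implication, where the product of the \emph{finite} representation of $\tfrac{\beta^m-1}{n}$ with the \emph{periodic} representation of $\tfrac{1}{\beta^m-1}$ must again be eventually periodic over a fixed finite alphabet; this is exactly what the hypothesis $|\beta'|\neq1$ for every conjugate buys us, through the parallel addition algorithm packaged in Corollary~\ref{c:1}. The converse, by contrast, is elementary algebra, the only delicate point being the passage from $\Z[\beta,\beta^{-1}]$ to $\Z[\beta]$, which is precisely why the conclusion is phrased with two exponents $i>j$ rather than with $\beta^m-1$ alone.
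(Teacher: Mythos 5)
Your proof is correct and follows essentially the same route as the paper's: the same factorization $\tfrac1n=\tfrac{\beta^m-1}{n}\cdot\tfrac{1}{\beta^m-1}$ with Corollary~\ref{c:1} supplying $\mathrm{Fin}_\A(\beta)=\Z[\beta,\beta^{-1}]$ and $\mathrm{Fin}_\A(\beta)\cdot\mathrm{Per}_\A(\beta)\subseteq\mathrm{Per}_\A(\beta)$ in one direction, and summing the periodic tail as a geometric series and clearing denominators by a power of $\beta$ in the other. The only difference is cosmetic: you spell out the passage from $\Z[\beta,\beta^{-1}]$ to $\Z[\beta]$ that the paper dismisses as ``easily rewritten in the desired form.''
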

\begin{proof}
Suppose that $\tfrac1n$ has an eventually periodic $(\beta,\A)$-representation
\[
\frac1n = \sum_{k\geq -L}^{+\infty} a_k\beta^{-k},\quad\text{with } a_{n}=a_{n+p} \text{ for } n > N.
\]

By summing the period as a geometric series we obtain
$$
\frac1n =\frac{z_1}{\beta^N} + \frac{z_2}{\beta^{N+p}(\beta^p-1)}\quad\text{for some }z_1,z_2\in\Z[\beta],
$$
which can be easily rewritten in the desired form.

Assume that $\beta^i-\beta^j\in q\Z[\beta]$ with $i>j$. Then we have that
\begin{equation}\label{eq:neco}
\frac1n = z\cdot \frac1{\beta^j}\cdot\frac1{\beta^{i-j}-1}, \quad\text{for some } z\in\Z[\beta].
\end{equation}
Then by Corollary~\ref{c:1} we know that
$z\cdot\frac1{\beta^j}\in\mathrm{Fin}_\A(\beta)$. Furthemore,
$$\frac1{\beta^{i-j}-1}=-\sum_{k=0}^\infty{\beta^{-k(i-j)}}\in\mathrm{Per}_\A(\beta),$$
 thus the expression~\eqref{eq:neco} belongs to $\mathrm{Fin}_\A(\beta)\cdot\mathrm{Per}_\A(\beta)\subset\mathrm{Per}_\A(\beta).$
\end{proof}

\begin{theorem}\label{theo:coprime}
Let $\beta$ be an algebraic number with the minimal polynomial $m(x)=a_dx^d+\dots+a_1x+a_0\in\Z[x]$, and let $n\in\N$.
If $\gcd(a_d,n)=\gcd(a_0,n)=1$, then $\beta^i-1\in n\Z[\beta]$ for some $i\in\N.$
\end{theorem}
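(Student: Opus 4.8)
The plan is to pass to the quotient ring $R_n := \Z[\beta]/n\Z[\beta]$, show it is finite, and show that the image of $\beta$ in $R_n$ is a unit; then the image of $\beta$ has finite multiplicative order, so $\beta^i \equiv 1$ in $R_n$ for some $i\in\N$, which is precisely the desired membership $\beta^i-1\in n\Z[\beta]$.

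First I would establish that $R_n$ is finite, which is the one genuinely non-obvious point: since $\beta$ need not be an algebraic integer, $\Z[\beta]$ is typically \emph{not} finitely generated as a $\Z$-module, so $R_n$ could a priori be infinite. The hypothesis $\gcd(a_d,n)=1$ is exactly what rescues the argument, for then the leading coefficient becomes invertible in $\Z_n$. Rewriting $m(\beta)=0$ as
\[
a_d\beta^d = -(a_{d-1}\beta^{d-1}+\dots+a_1\beta+a_0)
\]
and multiplying by $a_d^{-1}$ gives $\beta^d\equiv -a_d^{-1}(a_{d-1}\beta^{d-1}+\dots+a_0)$ in $R_n$. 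An easy induction on $k$ then reduces every $\beta^k$ with $k\ge d$ to a $\Z_n$-linear combination of $1,\beta,\dots,\beta^{d-1}$, so these $d$ elements generate $R_n$ as a $\Z_n$-module and hence $|R_n|\le n^d<\infty$.

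Next I would use $\gcd(a_0,n)=1$ to show that the image of $\beta$ is a unit in $R_n$. Since $m$ is the minimal polynomial of $\beta$ with $|\beta|>1$ we have $a_0\ne 0$, so $\beta\ne 0$, and from $m(\beta)=0$ one reads off directly, already inside $\Z[\beta]$, the identity
\[
\beta\cdot\bigl(-(a_d\beta^{d-1}+\dots+a_1)\bigr)=a_0.
\]
Because $a_0$ is coprime to $n$, its image is invertible in $R_n$, and therefore $\beta$ is a unit there.

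Finally, the units of the finite ring $R_n$ form a finite group, so the image of $\beta$ has finite order $i$, whence $\beta^i\equiv 1$ in $R_n$, i.e.\ $\beta^i-1\in n\Z[\beta]$. I expect the finiteness of $R_n$ to be the crux of the whole argument: once one notices that invertibility of $a_d$ modulo $n$ forces the higher powers of $\beta$ to collapse onto the finitely many classes of $1,\beta,\dots,\beta^{d-1}$, the remainder is the standard finite-ring pigeonhole argument. The role of the second hypothesis $\gcd(a_0,n)=1$ is solely to upgrade the conclusion from ``$\beta$ is eventually periodic'' (which would only give $\beta^i\equiv\beta^j$) to ``$\beta$ is a unit,'' which is exactly what allows the exponent $j=0$.
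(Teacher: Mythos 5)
Your proof is correct, and although the underlying mechanism is the same as the paper's --- pigeonhole modulo $n$, with $\gcd(a_0,n)=1$ and $\gcd(a_d,n)=1$ supplying invertibility in the two directions --- the packaging is genuinely different and worth comparing. The paper never forms the quotient ring $R_n=\Z[\beta]/n\Z[\beta]$: instead it runs an explicit recurrence on vectors $z^{(k)}\in\Z_n^d$, where each step adds the unique multiple $p_km(x)$ of the minimal polynomial that clears the constant coefficient and then shifts by $x$ (multiplication by $\beta^{-1}$ in disguise). Invertibility of $a_0$ makes successors unique, invertibility of $a_d$ makes predecessors unique, pigeonhole forces the orbit of $(1,0,\dots,0)$ to return to its starting point, and telescoping the defining relations yields the explicit congruence $\sum_{k}x^kp_km(x)\equiv x^{r}-1\pmod{n\Z[x]}$, which gives $\beta^r-1\in n\Z[\beta]$ under $x\mapsto\beta$. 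Your version compresses exactly this dynamics into ``a unit of a finite ring has finite order'': finiteness of $R_n$ (which you rightly flag as the crux, since $\Z[\beta]$ is not a finitely generated $\Z$-module) replaces finiteness of $\Z_n^d$, and your identity $\beta\cdot\bigl(-(a_d\beta^{d-1}+\dots+a_1)\bigr)=a_0$ replaces the unique-successor argument (incidentally, you do not need $\beta\neq0$ or the hypothesis $|\beta|>1$ at this point --- invertibility of $a_0$ modulo $n$ is all that is used). What your abstraction buys is brevity and a transparent account of why each gcd hypothesis enters, including why dropping $\gcd(a_0,n)=1$ would degrade the conclusion to $\beta^i\equiv\beta^j$, which is precisely the shape of the general statement in Theorem~\ref{thm:main}. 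What the paper's concrete recurrence buys is constructivity: the same successor relation is the prototype of the graph $G(m,n)$ and Theorem~\ref{thm:algo} in Section~\ref{sec:graphs}, where it becomes an algorithm that actually finds the exponent $i$ and the cofactor, something a finite-order existence argument does not directly provide.
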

\begin{proof}
Let $m(x)\in\Z[x]$ be the minimal polynomial of $\beta.$ Define sequences $z^{(k)}=(z_0^{(k)},\dots,z_{d-1}^{(k)})\in \Z_n^d$ and $p_k\in\Z_n$ for $k\in\N_0$ by the relation
\begin{equation}\label{eq:1}
\sum_{i=0}^{d-1} z_i^{(k)}x^i + p_km(x) \equiv \sum_{i=0}^{d-1}z_i^{(k+1)} x^{i+1}\pmod {n\Z[x]}\quad\text{with}\quad z^{(0)} = (1,0,\dots,0).
\end{equation}
The sequences $\{z^{(k)}\}$ and $\{p_k\}$ are uniquely defined. Indeed, each $z^{(k)}$ enforces that $p_k:=-z^{(k)}_0m(0)^{-1}\pmod n$
(using the invertibility of $m(0)=a_0$),
and consequently $z^{(k+1)}$ is also defined. Conversely, note that to each $z^{(k+1)}$ there are unique $z^{(k)}$ and $p_k$, as $a_d$ is invertible modulo $n$.

Clearly, the sequence $\{z^{(k)}\}$ can take only finitely many values. Let $r$ be the smallest index such that $z^{(r)}=z^{(s)}$ for some $s<r.$
Then because of the uniqueness of the followers and predecessors in $\{z^{(k)}\}$ we have $z^{(r)}=z^{(0)}=(1,0,\dots,0)$, whence necessarily
$z^{(r-d+1)} = (0,\dots,0,1).$
Then by~\eqref{eq:1} we have
\begin{equation}\label{eq:2}
\sum_{k=0}^{r-d} x^kp_km(x)\equiv x^{r}-1\pmod {n\Z[x]},
\end{equation}
because $z^{(0)}=(1,0,\dots,0),\ z^{(r)}=(0,\dots,0,1),$ and the rest of the
summands cancel out. We obtain the statement by applying
the homomorphism $x\mapsto\beta: \mathbb Z[x]\rightarrow \Z[\beta]$ on~\eqref{eq:2}.
\end{proof}

We will need the following lemma to generalize Theorem~\ref{theo:coprime}
also to the cases $\gcd(a_d,n)\neq 1$ or/and $\gcd(a_0,n)\neq 1.$ We will denote
the leading coefficient of the minimal polynomial of $\beta$ over $\Z$ as $c(\beta)$.
\begin{lemma}\label{lem:minpol}
Let $m_i(x)$ be the minimal polynomial for $\beta^i$, and let $c_i=c(\beta^i)$ be the leading coefficient of $m_i(x)$.
If $p\mid c_1$ for a prime $p$, then for each $j$ there is $i$ such that $p^j \mid c_i$.
\end{lemma}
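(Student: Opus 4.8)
The plan is to reduce the divisibility statement about the integers $c_i$ to a transparent identity between $p$-adic valuations of the conjugates of $\beta$ and of $\beta^i$. Fix once and for all an extension of the $p$-adic valuation $v_p$ (normalized by $v_p(p)=1$) to $\overline{\Q}$. The key fact I would establish first is that for any algebraic number $\gamma$,
\[
v_p\big(c(\gamma)\big)=\sum_{\gamma'}\max\big(0,\,-v_p(\gamma')\big),
\]
the sum running over all Galois conjugates $\gamma'$ of $\gamma$; that is, $v_p(c(\gamma))$ records the total contribution of the ``denominators at $p$'' of the conjugates of $\gamma$. To prove it, write the primitive integer minimal polynomial as $c(\gamma)\prod_{\gamma'}(x-\gamma')$ and consider the monic factor $\prod_{\gamma'}(x-\gamma')\in\overline{\Q}[x]$. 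Primitivity forces $v_p(c(\gamma))$ to equal minus the minimum of the $v_p$-values of the coefficients of this monic polynomial, and the Newton polygon identifies that minimum with $\sum_{\gamma'}\min(0,v_p(\gamma'))$ (the height of its lowest vertex), giving the displayed formula.

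With this in hand I would compare $\beta$ with $\beta^i$. Put $d=\deg\beta$ and $d_i=[\Q(\beta^i):\Q]$. As $\beta_k$ runs over the $d$ conjugates of $\beta$, the powers $\beta_k^i$ run over the $d_i$ conjugates of $\beta^i$, each value being attained exactly $d/d_i$ times; this is the usual count of the embeddings of $\Q(\beta)$ lying above a fixed embedding of $\Q(\beta^i)$. Since $v_p(\beta_k^i)=i\,v_p(\beta_k)$ and $i>0$, applying the valuation formula to both $\gamma=\beta$ and $\gamma=\beta^i$ and matching up the multiplicities yields
\[
v_p(c_i)=\frac{i\,d_i}{d}\,v_p(c_1).
\]

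The conclusion is then an immediate estimate. The hypothesis $p\mid c_1$ means $v_p(c_1)\geq 1$, and $d_i\geq 1$ always, so $v_p(c_i)\geq i/d$; choosing $i=jd$ (or any $i\ge jd$) forces $v_p(c_i)\geq j$, i.e.\ $p^j\mid c_i$, as required.

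I expect the main obstacle to lie entirely in the first step: setting up and correctly justifying the formula for $v_p(c(\gamma))$ in terms of the conjugates, and then carefully tracking the degree $d_i$ together with the $d/d_i$ multiplicities when passing from $\beta$ to $\beta^i$. The danger is in mishandling the (possibly smaller) degree of $\beta^i$ or the normalization of the valuation; once the identity $v_p(c_i)=\tfrac{i\,d_i}{d}\,v_p(c_1)$ is secured, the remainder is a single line.
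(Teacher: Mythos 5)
Your proposal is correct, and it takes a genuinely different route from the paper's proof. The paper argues by contradiction via module theory: it first treats the case $c_1=p^k$, where the assumption that $p^j\nmid c_i$ for all $i$ forces every $c_i$ to be a power of $p$ dividing $p^{j-1}$, whence $\Z[\beta]\subset\frac{1}{p^{j-1}}\mathcal{O}_K$; since the latter is a finitely generated $\Z$-module and $\Z$ is noetherian, $\Z[\beta]$ is finitely generated, making $\beta$ an algebraic integer and contradicting $c_1>1$. The general case $c_1=p^kb$ with $p\nmid b$ is then reduced to the prime-power case by passing to $\gamma=b\beta$ and checking that the passage back and forth changes leading coefficients only by factors coprime to $p$. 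Your argument is instead direct and quantitative: you extend $v_p$ to $\overline{\Q}$ and derive the exact identity $v_p(c_i)=\frac{i\,d_i}{d}\,v_p(c_1)$ from the formula $v_p(c(\gamma))=\sum_{\gamma'}\max\bigl(0,-v_p(\gamma')\bigr)$ together with the fact that the multiset $\{\beta_k^i\}$ covers each conjugate of $\beta^i$ exactly $d/d_i$ times. Both ingredients are correctly justified: the valuation formula follows from primitivity plus the ultrametric (Newton polygon) computation of the minimal coefficient valuation of $\prod_{\gamma'}(x-\gamma')$, and the multiplicity count is the standard embedding count, valid here since we are in characteristic zero; note also that the sum over all conjugates makes the formula independent of the chosen extension of $v_p$. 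What your route buys is effectivity and extra information: it shows $v_p(c_i)\geq i/d$, so the explicit choice $i\geq jd$ always works, and it in fact pins down $v_p(c_i)$ exactly --- a useful complement to the computational concerns of Section~\ref{sec:graphs}, since the paper's contradiction argument gives no bound on $i$ whatsoever. What the paper's route buys is elementarity: it needs no extension of $p$-adic valuations to $\overline{\Q}$ and no Newton polygons, only the facts that $c(\gamma)\gamma$ is an algebraic integer, that integrality of $b\gamma$ forces $c(\gamma)\mid b$, and that submodules of finitely generated $\Z$-modules are finitely generated.
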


\begin{proof}
Let us first assume that $c_1=p^k$ for some $k$, and that there is $j$ such that $p^j\nmid c_i$ for each $i$.

Observe that if $b\beta$ is an algebraic integer for some $b\in\Z$, then $c(\beta)\mid b$. Also $c(\beta)\beta$ is always an algebraic integer.
Hence $p^k\beta$ is an integer, and so also $p^{ik}\beta^i$ is an integer. Thus $c_i\mid p^{ik}$ and $c_i$ is a power of $p$. Since $p^j\nmid c_i$, it is at most $(j-1)$st power. Since $c_i\beta^i$ is an integer, we see that also $p^{j-1}\beta^i$ is an integer for each $i$.

We conclude that $\Z[\beta]\subset\frac 1{p^{j-1}}\mathcal O_K$ (where $K=\Q(\beta)$).
But $\frac 1{p^{j-1}}\mathcal O_K$ is a finitely generated $\Z$-module and $\Z$ is noetherian, and so also $\Z[\beta]$ is finitely generated $\Z$-module. But this implies that $\beta$ is an algebraic integer.
This is a contradiction with $c_1>1$.

For the general case, let $a_1=p^k b$ with $p\nmid b$ and consider $\gamma:=b\beta$. Let $b_i:=c(\gamma^i)$; by the assumption we have that $b_1=p^k$
(because a candidate for the minimal polynomial for $\gamma$ is $b^{d-1}m(x/b)$; we may need to divide by the gcd of coefficients, but not by $p$, because $p$ was coprime to $b$).

By the first part of the proof, we know that for each $j$ there is $i$ such that $p^j\mid b_i$. Let now $n_i(x)$ be the minimal polynomial for $\gamma^i=b^i\beta^i$.
Then $n_i(b^ix)$ is a candidate for the minimal polynomial for $\beta^i$; we may need to divide by the gcd of coefficients, but again not by $p$, as it is coprime to $b^i$. Thus $p^j\mid b_i$ implies that $p^j\mid a_i$.
\end{proof}
Note that we get the same statement for constant coefficients just by considering $\beta^{-1}$.

Now we are ready to prove the main technical result of this section, which can be viewed as a generalized version of little Fermat's theorem.
There is a number of these in the literature, including some that do not require $\beta$ to be an algebraic integer (e.g., Chapter~23 in~\cite{hasse}), but we could not locate our version, which seems to be in a somewhat different vein than the others.

\begin{theorem}\label{thm:main}
Let $\beta$ be an algebraic number of degree $d$. Then for each $n\in\N$ there exists $i>j\in\Z$ such that $\beta^i-\beta^j\in n\Z[\beta]$.
\end{theorem}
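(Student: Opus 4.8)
The goal is to produce, for an arbitrary algebraic number $\beta$ of degree $d$ and arbitrary $n\in\N$, integers $i>j$ with $\beta^i-\beta^j\in n\Z[\beta]$. The plan is to \emph{reduce to the coprime case} already handled in Theorem~\ref{theo:coprime}, using Lemma~\ref{lem:minpol} to absorb the bad prime factors of the leading and constant coefficients. Write the minimal polynomial of $\beta$ as $m(x)=a_dx^d+\dots+a_0$, and factor $n$ into its prime powers. The difficulty is concentrated on those primes $p\mid n$ that also divide $a_d$ or $a_0$, since for the primes coprime to both $a_d$ and $a_0$ we already have $\beta^{i}-1\in n\Z[\beta]$ directly from Theorem~\ref{theo:coprime}.

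\textbf{Step 1: pass to a suitable power of $\beta$.} I would first use Lemma~\ref{lem:minpol} (and its remark, which gives the analogous statement for the constant coefficient via $\beta^{-1}$) to find a single exponent $M$ such that the minimal polynomial $m_M(x)$ of $\gamma:=\beta^M$ has leading coefficient $c_M$ and constant coefficient divisible by very high powers of each bad prime $p$. Concretely, for each prime $p\mid n$ that divides $a_d$ (resp.\ $a_0$), choosing $j$ large enough in the lemma forces $p^{\,\mathrm{large}}\mid c_M$ (resp.\ divides the constant term), for some exponent depending on $p$; taking a common multiple yields one $M$ working simultaneously for all such primes. The point of making these divisibilities \emph{large} will become clear in the next step.

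\textbf{Step 2: clear the bad primes from the base so Theorem~\ref{theo:coprime} applies.} Having passed to $\gamma=\beta^M$, I want to split $n$ and handle each prime power $p^e\,\|\,n$ separately (combining at the end by the Chinese Remainder Theorem on $\Z_n\cong\prod \Z_{p^e}$, then reassembling a common exponent). For a prime $p$ coprime to both the leading and constant coefficients of $\gamma$, Theorem~\ref{theo:coprime} gives $\gamma^{\,i_p}-1\in p^e\Z[\gamma]\subseteq p^e\Z[\beta]$. For a \emph{bad} prime $p$, the leading or constant coefficient of $\gamma$ is divisible by a high power of $p$; the strategy is that this high divisibility lets me compare two powers $\gamma^{a}$ and $\gamma^{b}$ modulo $p^e$ directly — because the expansion of $\gamma^{k}$ in the $\Z$-basis $1,\gamma,\dots$ has its top and bottom coordinates controlled by $c_M$ and the constant term, which are $\equiv0\pmod{p^e}$ — reducing the recurrence \eqref{eq:1} to a strictly smaller effective degree modulo $p^e$, where the relevant extreme coefficients are now units. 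The recurrence/pigeonhole argument of Theorem~\ref{theo:coprime} then produces $\gamma^{i_p}-\gamma^{j_p}\in p^e\Z[\gamma]$.

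\textbf{Step 3: synchronize the exponents.} Each prime power $p^e\,\|\,n$ yields a relation $\gamma^{i_p}-\gamma^{j_p}\in p^e\Z[\gamma]$, but with its own pair of exponents. To merge them into a single relation modulo $n$, I would exploit periodicity: each such relation makes the sequence $(\gamma^k \bmod p^e\Z[\gamma])_k$ eventually periodic with some period $P_p$ and preperiod, so by choosing $i,j$ lying in the common periodic regime and differing by a common multiple of all the $P_p$, the relation $\gamma^{i}-\gamma^{j}\in p^e\Z[\gamma]$ holds for every $p^e\,\|\,n$ at once; since the ideals $p^e\Z[\gamma]$ intersect in $n\Z[\gamma]$ (using CRT on $\Z[\gamma]/n\Z[\gamma]\cong\prod \Z[\gamma]/p^e\Z[\gamma]$), this gives $\gamma^{i}-\gamma^{j}\in n\Z[\gamma]\subseteq n\Z[\beta]$. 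Finally translating back through $\gamma=\beta^M$ turns this into $\beta^{Mi}-\beta^{Mj}\in n\Z[\beta]$, which is the desired conclusion with the exponents $Mi>Mj$.

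\textbf{Main obstacle.} The hard part will be Step~2: showing that high $p$-divisibility of the extreme coefficients of $\gamma$'s minimal polynomial genuinely collapses the mod-$p^e$ recurrence to a smaller invertible system, so that the clean pigeonhole argument of Theorem~\ref{theo:coprime} survives. One must be careful that dividing out by bad primes does not also destroy invertibility at the \emph{other} end of the polynomial; this is exactly why Lemma~\ref{lem:minpol} is applied to both $a_d$ and $a_0$ (via $\beta$ and $\beta^{-1}$) and why the divisibility in Step~1 must be taken large relative to $e$. Getting the bookkeeping of which coordinates are forced to vanish modulo $p^e$ correct is the crux.
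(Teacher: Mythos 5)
Your overall architecture (reduce to prime powers, apply Theorem~\ref{theo:coprime} when $p\nmid a_0a_d$, invoke Lemma~\ref{lem:minpol} for the bad primes, recombine by CRT/periodicity) matches the paper's skeleton, but your Step~2 has a genuine gap, and it is exactly the point where the paper does something you do not: \emph{induction on the degree of $\beta$}. After Lemma~\ref{lem:minpol} gives you $\gamma=\beta^M$ whose minimal polynomial $m_M(x)=c_Dx^D+\dots+c_0$ has $p^e\mid c_D$, reducing mod $p^e$ indeed kills the top term, but the new top coefficient $c_{D-1}$ (and likewise the relevant bottom coefficient) has no reason whatsoever to be a unit mod $p$; your assertion that the collapsed recurrence has ``extreme coefficients [that] are now units'' is unjustified and in general false. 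Nor can you repair this by applying Lemma~\ref{lem:minpol} again: the truncated polynomial $m_M(x)-c_Dx^D$ is not the minimal polynomial of $\gamma$, nor of any algebraic number you control, so the pigeonhole argument of Theorem~\ref{theo:coprime} does not apply to it directly. The paper resolves precisely this difficulty by observing that the \emph{roots} of the truncated polynomial are algebraic numbers of strictly smaller degree, and by structuring the entire theorem as an induction on $d$: the inductive hypothesis (the full statement, for all $n$, in smaller degree) yields $r(x)\bigl(m_M(x)-c_Dx^D\bigr)=x^i-x^j-nz(x)$ in $\Z[x]$, and then substituting $x\mapsto\beta^M$ and using $n\mid c_D$ gives $\beta^{Mi}-\beta^{Mj}\in n\Z[\beta^M]\subset n\Z[\beta]$, with the degree-one case ($\beta\in\Z$, ordinary pigeonhole modulo $n$) as the base. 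Without this inductive structure, your Step~2 cannot be completed as described; you yourself flag it as the unresolved crux, and it is.

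A secondary, smaller issue: in Step~1 you claim that taking a common multiple of the exponents produced by Lemma~\ref{lem:minpol} gives a single $M$ with high $p$-divisibility of both extreme coefficients for all bad primes at once. The lemma only asserts the existence of \emph{some} exponent $i$ for each prime and each target power; divisibility by $p^j$ at exponent $i$ does not formally transfer to multiples of $i$ (one only gets $c_{ik}\mid c_i^k$, which points the wrong way), so this needs a separate argument. It can also be avoided entirely by treating each prime with its own exponent and merging only at the end — your Step~3 machinery (eventual periodicity of powers modulo $p^e\Z[\beta]$ plus CRT) is sound and is essentially how the paper merges the coprime cases, so the fix is available to you; the fatal problem remains Step~2.
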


\begin{proof}
Let us first prove that if the statement holds for coprime $n_1,n_2\in\N$, then it also holds for $n_1n_2$.
Assuming its validity for $n_1, n_2$,  we can suppose that
  $1-\beta^{i_1}\in n_1\Z[\beta,\beta^{-1}]$, and $1-\beta^{i_2}\in n_2\Z[\beta,\beta^{-1}].$
  Then it is also true that for all $k\in\N$ we have $1-\beta^{ki_1}\in n_1\Z[\beta,\beta^{-1}]$ and $1-\beta^{ki_2}\in n_2\Z[\beta,\beta^{-1}]$.
  From $\gcd(n_1,n_2)=1$ it follows that $1-\beta^{i_1i_2}\in n_1n_2\Z[\beta,\beta^{-1}],$ i.e., the statement is true for $n=n_1n_2$.

Hence it remains to prove the theorem when $n=p^\ell$
  for a prime $p$ and $\ell\in\N.$
  The case $\gcd(a_0,p)=\gcd(a_d,p)=1$ is solved in Theorem~\ref{theo:coprime}.
  Thus assume w.l.o.g. that $p|a_d$, otherwise we consider $\beta^{-1}$. We will proceed by induction on the degree of $\beta$.
  According to Lemma~\ref{lem:minpol} we can find $k\in\N$ such that $\beta^k$ has the minimal polynomial $m_k(x)$, such that $n$ divides its leading coefficient $c(\beta^k)$. Roots of $m_k(x)-c(\beta^k)x^d$ are of a smaller degree, therefore by the induction we have that
  \[{p}(x)(m_k(x)-c(\beta^k)x^d)=x^i-x^j-nz(x)\]
  for some $p(x),z(x)\in\Z[x]$.
  Then after a simple rearrangement, and under the map $x\mapsto \beta^k$ we obtain $$\beta^{ki}-\beta^{kj}=nz(\beta^k)-\beta^{kd}c(\beta^k){p}(\beta^k)\in n\Z[\beta^k]\subset n\Z[\beta].$$
  The induction is complete by realizing that the statement is true for $\beta\in\Z.$
  \end{proof}

  \begin{proof}[Proof of Theorem~\ref{thm:intro}]
  Each $x\in\Q(\beta)$ can be written as $x = \tfrac zn$ with $z\in\Z[\beta]$, and $n\in\N.$ 
  By Theorem~\ref{thm:main} together with Proposition~\ref{prop:main} we have that $\tfrac1n\in\operatorname{Per}_\A(\beta).$
  Then by Corollary~\ref{c:1} we have that $z\in\operatorname{Fin}_\A(\beta)$, and subsequently also that $x\in\operatorname{Per}_\A(\beta).$
  \end{proof}
\section{Bases with conjugates on the unit circle}\label{sec:salem}
In the previous section, a necessary tool for obtaining our results was an existence of the parallel addition in base $\beta.$
The reason was that we were then able to convert eventually periodic representations over an infinite alphabet $\Z$ to a finite one.
However, the existence of the parallel algorithms is possible only if there is no conjugate of $\beta$ lying on the unit circle. Nevertheless,
finding periodic representations of $\tfrac1n$ is possible if one proceeds more carefully. In this section we prove the following theorem.
\begin{theorem}\label{thm:salem}
Let $\beta$ be an algebraic number such that $|\beta'|=1$ for a conjugate $\beta'$. Then $\tfrac1n\in\Per_\A(\beta)$ for some $\A\subset\Z$ finite.
\end{theorem}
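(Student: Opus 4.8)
The plan is to exploit the fact that Theorem~\ref{thm:main} is purely algebraic: its proof makes no assumption whatsoever on the location of the conjugates of $\beta$, so it applies verbatim to our base with $|\beta'|=1$. It thus still supplies integers $i>j$ with $\beta^i-\beta^j\in n\Z[\beta]$. What we may no longer invoke is the second half of Proposition~\ref{prop:main}, since its proof passes through Corollary~\ref{c:1}, hence through parallel addition, which is exactly the tool that is unavailable when $\beta$ has a conjugate on the unit circle. I would therefore reprove the implication $\beta^i-\beta^j\in n\Z[\beta]\Rightarrow\tfrac1n\in\per$ directly, by keeping explicit track of the digits and checking by hand that they remain in a finite set, rather than delegating the alphabet reduction to a parallel addition algorithm.

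Concretely, set $m=i-j$ and write $\beta^i-\beta^j=nw$ with $w\in\Z[\beta]$, so that $\tfrac1n=w\beta^{-j}(\beta^m-1)^{-1}$. Since $|\beta|>1$, the geometric expansion $(\beta^m-1)^{-1}=\sum_{k\ge1}\beta^{-km}$ converges absolutely. Expanding $w\beta^{-j}$ as a Laurent polynomial $\sum_l c_l\beta^{-l}$ with finitely many nonzero integer coefficients $c_l$, say supported on $-L\le l\le M$, the Cauchy product is legitimate by absolute convergence and yields $\tfrac1n=\sum_t d_t\beta^{-t}$ with $d_t=\sum_{k\ge1}c_{t-km}$.

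The heart of the argument, and the step that replaces the parallel addition, is the observation that this convolution of a finitely supported integer sequence against the periodic $0/1$ pattern coming from $(\beta^m-1)^{-1}$ automatically produces bounded digits: for each $t$ the index $l=t-km$ lies in $[-L,M]$ for at most $\lfloor(L+M)/m\rfloor+1$ values of $k$, whence $|d_t|\le(\lfloor(L+M)/m\rfloor+1)\max_l|c_l|$ uniformly in $t$. A one-line shift computation moreover gives $d_{t+m}=d_t+c_t$, so that $d_{t+m}=d_t$ as soon as $t$ exceeds $M$; thus $(d_t)$ is eventually periodic of period $m$. Taking $\A=\{-D,\dots,D\}$ with $D=\sup_t|d_t|<\infty$ then exhibits the required eventually periodic $(\beta,\A)$-representation, giving $\tfrac1n\in\per$.

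I expect the only genuine subtleties to be bookkeeping rather than conceptual: carefully justifying the rearrangement in the Cauchy product (which absolute convergence handles) and cleanly disposing of the finitely many negative values of $t$, i.e.\ the positive powers of $\beta$ that form the pre-period, so that the representation really has the admissible shape $\sum_{t\ge-L'}d_t\beta^{-t}$. The conceptual point, and the reason the unit-circle case still succeeds where the general machinery fails, is that for the single, very structured element $\tfrac1n$ the dangerous inflation of the alphabet simply never occurs, so no parallel addition is needed to keep the digits finite.
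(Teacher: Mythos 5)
Your core computation is correct, and it is essentially the paper's own expansion: the paper likewise writes $\frac1n=\frac1{\beta^i}\sum_{k\geq0}z\beta^{-k(i-j)}$ with $\beta^i-\beta^j=nz$ and reads off the digits by superposing shifted copies of the coefficient string of $z$. Your convolution formula $d_t=\sum_{k\geq1}c_{t-km}$, the overlap bound on $|d_t|$, the identity $d_{t+m}=d_t+c_t$ forcing eventual periodicity, and the vanishing of $d_t$ for $t$ small are all fine, as is your observation that Theorem~\ref{thm:main} is purely algebraic and applies when $\beta$ has a conjugate on the unit circle. So, under the reading ``for each $n$ there is a finite $\A\subset\Z$ with $\tfrac1n\in\Per_\A(\beta)$'', your proof is complete, and it shows this reading is an almost immediate consequence of Theorem~\ref{thm:main}.

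However, the statement the paper actually proves --- see the final sentence of its proof, ``we can choose a common alphabet for all $n\in\N$'', and the corresponding announcement in the introduction --- is the uniform one: a \emph{single} finite $\A\subset\Z$ such that $\tfrac1n\in\Per_\A(\beta)$ for every $n$. For that statement your argument has a genuine gap. Your alphabet bound $D\leq\bigl(\lfloor(L+M)/(i-j)\rfloor+1\bigr)\max_l|c_l|$ depends on $n$ in an uncontrolled way: Theorem~\ref{thm:main} gives no bound on the coefficients of $w$, nor on the ratio of the support length of $w$ to the gap $i-j$, as $n$ grows. Removing this dependence is exactly the purpose of Lemma~\ref{lem:salem}, which your proof bypasses, and it needs two ideas absent from your argument: (i) multiplying $\beta^i-\beta^j=nz$ by $\beta^{r(i-j)}$ and summing over $r=0,\dots,s$ telescopes to $\beta^{i+s(i-j)}-\beta^j=nz\sum_{r}\beta^{r(i-j)}$, which for $s$ large stretches the gap until the degree of the coefficient polynomial is less than twice the gap, so each digit of your convolution receives at most two contributions; (ii) writing $x^{i'}-x^{j'}-nD(x)=p_n(x)m(x)$ and reducing the cofactor $p_n$ modulo $n$ (which changes $D$ by a polynomial multiple of $m(x)$, harmlessly) forces the coefficients of $D$ to be bounded by $C=M(\deg m+1)$, a constant depending only on $\beta$ and not on $n$. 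Together these give the uniform digit bound $2C$, hence one alphabet for all $n$. Without (i) and (ii), the ``inflation of the alphabet'' you dismiss at the end does occur --- not for a fixed $n$, but across $n$.
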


\begin{lemma}\label{lem:salem}
Let $\beta$ be an algebraic number. Then for each $n\in\N$ one can find $i(n)>j(n)\in\Z$ such that
$\beta^{i(n)}-\beta^{j(n)} = n\sum_{k=0}^{m(n)} d_k(n)\beta^k = z(n)$, such that
\begin{enumerate}
 \item $m(n) < 2(i(n)-j(n))$,
 \item there exists $C>0$, such that $|d_k(n)|<C$ for any $k,n.$
\end{enumerate}
\end{lemma}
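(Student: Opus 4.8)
The plan is to turn the proof of Theorem~\ref{theo:coprime} into a quantitative statement, the decisive point being to keep working with the \emph{fixed} minimal polynomial $m(x)=a_dx^d+\dots+a_0$ of $\beta$, so that every digit size is controlled by $\|m\|_\infty$ and $d$ alone, uniformly in $n$. In particular I would \emph{not} pass to $\beta^k$ as in Theorem~\ref{thm:main}: the minimal polynomial of $\beta^k$ has coefficients growing with $k$, which would ruin any uniform bound on the $d_k(n)$.

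Assume first that $\gcd(n,a_0)=1$, so that the recurrence~\eqref{eq:1} is forward-deterministic on the finite state space $\Z_n^d$ and the sequence $z^{(k)}$ is eventually periodic, say with pre-period $s_0$ and period $P$. Making the congruences explicit, write $P_k(x)=\sum_{i=0}^{d-1}z_i^{(k)}x^i$ (with $z_i^{(k)}\in\{0,\dots,n-1\}$) and $P_k(x)+p_km(x)-xP_{k+1}(x)=nE_k(x)$ with $p_k\in\{0,\dots,n-1\}$; since the left-hand side is divisible by $n$ coefficientwise and has coefficients of size $O(n\|m\|_\infty)$, each $E_k\in\Z[x]$ has $\deg E_k\le d$ and $\|E_k\|_\infty\le 2+\|m\|_\infty$. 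Weighting each identity by $x^k$ and summing telescopes the $P_k$ terms: over $k=0,\dots,s_0-1$ (using $P_0=1$) it gives the polynomial identity $1-x^{s_0}P_{s_0}(x)=n\sum_{k<s_0}x^kE_k(x)-\bigl(\sum_{k<s_0}x^kp_k\bigr)m(x)$, hence $1-\beta^{s_0}P_{s_0}(\beta)=n\Theta(\beta)$ after applying $x\mapsto\beta$; and over $k=s_0,\dots,s_0+\ell-1$ with $\ell$ a multiple of $P$ (so $P_{s_0+\ell}=P_{s_0}$) it gives $\beta^{s_0}P_{s_0}(\beta)(1-\beta^{\ell})=n\Psi(\beta)$. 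Here $\Theta=\sum_{k<s_0}x^kE_k$ and $\Psi=\sum_{s_0\le k<s_0+\ell}x^kE_k$ both have uniformly bounded digits.

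Eliminating $\beta^{s_0}P_{s_0}(\beta)$ between the two relations gives $\beta^{\ell}-1=n\bigl(\Theta(\beta)\beta^{\ell}-\Theta(\beta)-\Psi(\beta)\bigr)$, and multiplying by $\beta^{s_0}$ produces $\beta^{s_0+\ell}-\beta^{s_0}=n\,z(\beta)$ with $z(\beta)=\beta^{s_0}\bigl(\Theta(\beta)\beta^{\ell}-\Theta(\beta)-\Psi(\beta)\bigr)$, so that $i(n)=s_0+\ell$, $j(n)=s_0$, and $i(n)-j(n)=\ell$. Choosing $\ell=tP$ with $t$ large enough that $\ell>2s_0+d$ makes both required bounds hold simultaneously: the two shifted copies $-\Theta(\beta)$ and $\beta^{\ell}\Theta(\beta)$ then occupy disjoint exponent ranges, so each digit of $z$ is a sum of at most $O(d)$ of the coefficients of $\Theta$ and $\Psi$, giving the uniform constant $C=O(d\,\|m\|_\infty)$ of item~(2); simultaneously $\deg z\le\ell+2s_0+d-1<2\ell=2(i(n)-j(n))$, which is item~(1). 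The case $\gcd(n,a_d)=1$ is identical after replacing $\beta$ by $\beta^{-1}$ and clearing the finitely many negative powers by one more multiplication by a bounded power of $\beta$.

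The genuine obstacle is an $n$ divisible by a prime $p\mid\gcd(a_0,a_d)$, where neither recurrence is available, and indeed no bounded relation can rescue it: if $r(\beta)\in p\Z[\beta]$ then $r(x)=ps(x)+m(x)g(x)$ for some $s,g\in\Z[x]$, so $r(0)\equiv a_0g(0)\equiv0\pmod p$, and symmetrically the leading coefficient of any such $r$ is divisible by $p$, so the recurrence genuinely stalls at both ends. The approach I would take keeps the \emph{bounded} core relation obtained by reducing $m(\beta)=0$ modulo $p$: with $v=\min\{s:p\nmid a_s\}$ and $e=\max\{s:p\nmid a_s\}$ one has $\beta^{v}h(\beta)=p\,\phi(\beta)$, where $h(x)=a_v+\dots+a_ex^{e-v}$ has $h(0),\ \mathrm{lead}(h)$ coprime to $p$ and $\phi\in\Z[\beta]$ is bounded. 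Since $h$ has both ends invertible modulo $p$, it can play the role of $m$ for the invertible part of $\beta$ in the splitting of $\Z[\beta]/p^{\ell}$ into its nilpotent and unit parts, while the finitely many low powers $\beta^{0},\dots,\beta^{v-1}$ contribute only boundedly. Running the telescoping recurrence off $h$ and lifting the resulting relation from modulus $p$ to modulus $p^{\ell}$ should again yield a certificate with digits $O(\|m\|_\infty)$; carrying out this lifting while preserving the uniform digit bound is, I expect, the crux of the argument.
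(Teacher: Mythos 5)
Your proposal fails at exactly the point you flag yourself: for a prime $p\mid n$ dividing both $a_0$ and $a_d$ you only offer a sketch (split off the ``unit part'' $h(x)$ of $m$ modulo $p$, then lift from modulus $p$ to $p^\ell$), and you concede that carrying out this lifting while preserving the uniform digit bound is ``the crux of the argument'' --- i.e.\ it is not done. This is a genuine gap, not a technicality: the whole difficulty of Theorem~\ref{thm:main} lives in that case, and the paper needs Lemma~\ref{lem:minpol} plus an induction on the degree of $\beta$ to get through it. You would additionally need a CRT-type combination step, with bounds, for composite $n$ meeting both ends without a common prime (e.g.\ $n=6$, $a_0=2$, $a_d=3$, where neither of your two completed cases applies to $n$ itself); you never address this. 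Finally, your claimed obstruction ``the leading coefficient of any such $r$ is divisible by $p$'' is false: in Example~\ref{ex:algo1} of the paper, $m(x)=3x^2+2x+3$ has $p=3$ dividing both $a_0$ and $a_d$, yet $\beta^3-\beta\in 6\Z[\beta]$, and $x^3-x$ has leading coefficient $1$. Only the constant-term half of your obstruction is correct, and it merely forces $j(n)\geq 1$. (To give credit: your treatment of the case $\gcd(n,a_0)=1$ --- forward recurrence with pre-period $s_0$ and period $P$, then eliminating $\beta^{s_0}P_{s_0}(\beta)$ between the two telescoped identities --- is correct and is a nice quantitative refinement of Theorem~\ref{theo:coprime}.)

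The idea you are missing is that the digit bound need not be threaded through the existence proof at all, so your opening premise --- that one must avoid passing to $\beta^k$ because the minimal polynomial of $\beta^k$ has large coefficients --- solves a problem that does not exist, and it is what forced you onto the hard road. The paper takes the relation $\beta^i-\beta^j=n\sum_{k=0}^{m}d_k\beta^k$ from Theorem~\ref{thm:main} as a black box, with digits as bad as they may be, and repairs it a posteriori in two moves. First, multiplying by $\beta^{r(i-j)}$ and summing over $r=0,\dots,s$ telescopes the left side to $\beta^{(s+1)i-sj}-\beta^j$, so with $i(n)=(s+1)i-sj$, $j(n)=j$ the gap $i(n)-j(n)=(s+1)(i-j)$ can be made as large as desired relative to $j+m$; this secures item~(1). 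Second --- the key step --- write the resulting identity in $\Z[x]$ as $x^{i(n)}-x^{j(n)}-nD(x)=p_n(x)m(x)$ and reduce the cofactor $p_n$ coefficientwise modulo $n$: replacing $p_n$ by $\widetilde p_n$ with coefficients in $\{0,\dots,n-1\}$ changes $D$ by an integer multiple of $m$, keeps $i(n),j(n)$ and does not increase the degree, and the new digit polynomial $\bigl(x^{i(n)}-x^{j(n)}-\widetilde p_n(x)m(x)\bigr)/n$ has coefficients bounded by $M(\deg m+1)+1$, where $M$ is the maximum absolute value of the coefficients of $m$ --- uniformly in $n$ and independently of where the original relation came from. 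This decoupling (existence from Theorem~\ref{thm:main}, bounds from reduction against the \emph{fixed} polynomial $m$) is why the paper's proof is a few lines, while your route requires re-proving Theorem~\ref{thm:main} quantitatively in every arithmetic case, which is precisely where it breaks down.
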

\begin{proof}
Fix an $n\in\N$, the existence of $i>j$ and $d_k$'s satisfying $\beta^i-\beta^j = n\sum_{i=0}^{m}d_k\beta^k$ is given by Theorem~\ref{thm:main}.
Multiplying both sides of this equation by $\beta^{r(i-j)}$ and summing for $r=0,1,\dots,s$ we obtain
\[
\beta^{(s+1)i-sj}-\beta^j=n \sum_{k=0}^{(s+1)i-sj+m} \widetilde{d_k}\beta^k.
\]
We can satisfy item (1) by setting $i(n) = (s+1)i-sj$, $j(n)=j$, $d_k(n) = \widetilde{d_k}$, and $m(n) = (s+1)i-sj+m$ for an appropriate
value $s$.

Thus we have
\[x^{i(n)}-x^{j(n)}-n\sum_{k=0}^{m(n)} {d_k}(n)x^k = p_n(x)m(x)\]
for some $p_n(x)\in\Z[x].$ We can reduce the coefficients of $p_n(x)$ modulo $n$ to assume that they are all between 0 and $n$.
Let $M$ be the maximum of absolute values of coefficients of $m(x)$.
Then the polynomial $p_n(x)m(x)$ has all coefficients less than $nM(\deg m+1)$ in absolute value. Consequently, we see that in item (2) we can take $C=M(\deg m+1)$.
\end{proof}
\begin{remark}
Note that one can replace the factor $2$ in item (1) by $1+\varepsilon$ for any $\varepsilon>0$.
\end{remark}
\begin{proof}[Proof of Theorem~\ref{thm:salem}]
For fixed $n\in\N$ apply Lemma~\ref{lem:salem}, i.e., we have that $\beta^i-\beta^j = nz$ for some $i>j$ and $z=\sum_{k=0}^{m}d_k(n)\beta^k\in\Z[\beta]$. Then
\[
\frac1n = \frac{1}{\beta^i}\frac{z}{1-\beta^{j-i}} = \frac1{\beta^i}\sum_{k=0}^{+\infty}z\beta^{-k(i-j)}.
\]
The latter is indeed a periodic representation over an integer alphabet that is bounded by $2\max\{|d_k(n)| : 0\leq k\leq m\}$ (here we used that $m<2(i-j)$). Since $d_k(n)$ are bounded independently
of $n$, we can choose a common alphabet for all $n\in\N.$
\end{proof}
\section{Computational point of view}\label{sec:graphs}
The results so far showed the existence of the eventually periodic $(\beta,\A)$-representations. Because of the induction, the proof of Theorem~\ref{thm:main} does not give
an explicit way of finding $i,j$ such that $\beta^i-\beta^j\in n\Z[\beta].$ In this section we show how to compute the pair $i,j$. The method we use is in fact in the
background of the proof of Theorem~\ref{theo:coprime}.

From now on we will handle elements of $\Z[\beta]$ as elements of the quotient ring $\Z[x]/{m(x)}\cong \Z[\beta]$, where $m(x)$ is the minimal polynomial of $\beta,$ through the isomorphism $x\mapsto \beta.$

Let us start with an example which is not covered bny the results of~\cite{BMPV}.
\begin{example}\label{ex:algo1}
Consider $m(x)=3x^2+2x+3$, and $n=6.$ Then we have
 $$0\equiv (2x^2+3x+4)m(x)=6x^4+13x^3+24x^2+17x+12\pmod {m(x)},$$
hence $$x^3-x \equiv -6x^4-12x^3-24x^2-18x-12\in n(\Z[x]/(m(x)),$$
or equivalently, $$\beta^3-\beta = -6\beta^4-12\beta^3-24\beta^2-18\beta-12\in n\Z[\beta].$$
\end{example}
Let us show how such $i,j$ can be found in general. Assume that $x^i-x^j\equiv np(x)$ in $\Z[x]/m(x)$ for some $p(x)\in\Z[x]$. This is equivalent to the existence of $r(x)\in\Z[x]$ such that
\begin{equation}\label{eq:xyz}
x^i-x^j-qp(x) = r(x)m(x)\quad\text{in }\Z[x].
\end{equation}
The product $r(x)m(x)$ can be viewed (roughly speaking) as a ``sum of shifted multiples of $m(x)$''. This idea is illustrated in the following table, where we continue with Example~\ref{ex:algo1}.

$$
\begin{array}{rccccc}
2x^2m(x)=&6&4&6& & \\
3xm(x)=& &9&6&9& \\
4m(x)=& & &12&8&12 \\ \hline
(2x^2+3x+4)m(x)=&6&13&24&17&12
\end{array}
$$
In each row lies a multiple of the minimal polynomial, the power of $x$ corresponds to the shift. In order to satisfy~\eqref{eq:xyz} for some $p(x)$, we want the tuple of the sums of the columns (in our case $(6,13,24,17,12)$) to be equivalent $\pmod n$ to a vector with the only two non-zero entries being $1$ and $-1$. In fact, we can also consider the table to live in $\Z_n$ to directly obtain the result
$$
\begin{array}{rccccc}
2x^2m(x)=&0&4&0& & \\
3xm(x)=& &3&0&3& \\
4m(x)=& & &0&2&0 \\ \hline
(x^2+2x+1)m(x)=&0&1&0&-1&0
\end{array}
$$
When constructing $r(x)$, we can proceed from higher powers of $x$ to lower (or from left to right in the table) wanting to add an appropriate multiple of the minimal polynomial such that the left most digit sums to zero (or $1$ at one position and $-1$ at another position) during each step. However, we do not have prior knowledge of where the digits $1$ and $-1$ should be created.

\begin{definition}
Let $m(x)=\sum_{i=0}^d a_ix^i$ and let $n\in\N.$ The graph $G(m,n)=(V,E)$ is the oriented graph with vertices $V = \Z_n^d\times\{A,B,C\}$, and with the set $E$ of labeled edges $(y_d,\dots, y_1;\gamma)\xrightarrow{k}(z_d,\dots,z_1;\delta), k\in\Z_n,$ if
\begin{enumerate}
\item $\gamma=\delta$ and $\sum_{i=1}^d y_ix^i+k\sum_{i=0}^d{a_ix^i} \equiv \sum_{i=1}^{d}z_ix^{i-1}\quad\pmod{ n\Z[x]},$
\item $\gamma=A,\delta=B$ and $\sum_{i=1}^d y_ix^i+k\sum_{i=0}^d{a_ix^i} \equiv x^d+\sum_{i=1}^{d}z_ix^{i-1}\ \pmod {n\Z[x]},$
\item $\gamma=B,\delta=C$ and $\sum_{i=1}^d y_ix^i+k\sum_{i=0}^d{a_ix^i} \equiv -x^d+\sum_{i=1}^{d}z_ix^{i-1}$
    $\pmod {n\Z[x]}.$
\end{enumerate}

\end{definition}
The graph $G(m,n)$ has the following meaning. Consider again Example~\ref{ex:algo1}. The labels of edges correspond to the coefficients of $r(x)=2x^2+3x+4,$ i.e.,\ we have a path
$$
(0,0;A)\xrightarrow{2}(4,0;A)\xrightarrow{3}(0,3;B)\xrightarrow{4}(5,0;B)\xrightarrow{0}(0,0;C)
$$
Note that the change of the third entry of a vertex from $A$ to $B$ corresponds to the situation that we created the digit $1$, while the change from $B$ to $C$ means that the digit $-1$ was produced.

\begin{theorem}\label{thm:algo}
Let $\beta$ be an algebraic number with no conjugate on the unit circle, let $m(x)$ be the minimal polynomial of $\beta$, and let $n\in\N$.
Then $\tfrac1n\in{\rm Per}_\A(\beta)$ for some $\A\subset\C$ if and only if in the graph $G(m,n)$ there exists a path from $(0,\dots,0;A)$ to $(0,\dots,0;C)$.

Moreover, if this path has labels $c_0,c_1,\dots,c_{s-1}$, then
$$(c_0x^{s-1}+c_1x^{s-2} + \dots + c_{s-1})m(x)\equiv x^i-x^j\quad\pmod {n\Z[x]}$$
for some $i,j\in\Z.$
\end{theorem}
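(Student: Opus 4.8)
The plan is to first reduce the stated equivalence to the arithmetic condition of Proposition~\ref{prop:main}, and then to match that condition against paths of $G(m,n)$ by a single telescoping computation that also yields the ``moreover'' clause. Throughout I use the identification $\Z[\beta]\cong\Z[x]/(m(x))$, under which $\beta^i-\beta^j\in n\Z[\beta]$ is equivalent to the existence of $r(x)\in\Z[x]$ with $r(x)m(x)\equiv x^i-x^j\pmod{n\Z[x]}$. Since $\beta$ has no conjugate on the unit circle, Proposition~\ref{prop:main} applies (its proof uses only parallel addition and Corollary~\ref{c:1}, hence is valid for complex bases $|\beta|>1$ as well) and gives that $\tfrac1n\in\per$ for some finite $\A$ if and only if $\beta^i-\beta^j\in n\Z[\beta]$ for some $i>j$. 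It therefore suffices to prove that such an $r(x)$ exists if and only if $G(m,n)$ contains a path from $(0,\dots,0;A)$ to $(0,\dots,0;C)$, and to read off $i,j$ from the labels of that path.

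For the direction from paths to polynomials I read a path
\[
(0,\dots,0;A)=v_0\xrightarrow{c_0}v_1\to\cdots\xrightarrow{c_{s-1}}v_s=(0,\dots,0;C)
\]
as a run of a shift register: to a vertex $v_t=(y_d^{(t)},\dots,y_1^{(t)};\gamma_t)$ I attach the state polynomial $P_t(x)=\sum_{i=1}^d y_i^{(t)}x^i$, which always has zero constant term. The three edge relations then take the uniform shape
\[
P_t(x)+c_t m(x)\equiv \varepsilon_t x^d+\tfrac1x P_{t+1}(x)\pmod{n\Z[x]},
\]
where $\varepsilon_t=1$ on the unique $A\to B$ edge, $\varepsilon_t=-1$ on the unique $B\to C$ edge, and $\varepsilon_t=0$ on all same-stage edges, and where $\tfrac1x P_{t+1}(x)$ is a polynomial precisely because $P_{t+1}$ has no constant term. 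Multiplying the $t$-th relation by $x^{s-1-t}$ and summing, the $P_t$-contributions telescope and, since $P_0=P_s=0$, cancel completely, leaving
\[
\Big(\sum_{t=0}^{s-1}c_t x^{s-1-t}\Big)m(x)\equiv\sum_{t=0}^{s-1}\varepsilon_t x^{\,s-1-t+d}\pmod{n\Z[x]}.
\]
Writing $r(x)=\sum_t c_t x^{s-1-t}$, the left-hand side is $r(x)m(x)$; on the right exactly one $\varepsilon_t$ equals $+1$ (say at $t_1$) and one equals $-1$ (at $t_2$, with $t_1<t_2$ since $A$ precedes $B$ precedes $C$), so the right-hand side is $x^i-x^j$ with $i=s-1-t_1+d>j=s-1-t_2+d$. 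This proves the ``moreover'' statement and, since $m(\beta)=0$, gives $\beta^i-\beta^j\in n\Z[\beta]$.

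For the converse I run the same register forward. Given $r(x)$ with $r(x)m(x)\equiv x^i-x^j\pmod{n\Z[x]}$, I first multiply the congruence by a suitable power of $x$, replacing $r$ by $x^e r$, so that $d\le j<i$; note then $i\le s-1+d$ where $s-1=\deg r$. Starting from $P_0=0$ in stage $A$ and reading the coefficients $c_t\in\Z_n$ of $r$, I define $\varepsilon_t$ as the $x^d$-coefficient of $P_t+c_tm$ modulo $n$ and let $P_{t+1}$ be the remaining lower part shifted up by one, so that the uniform relation above holds by construction. The telescoping identity now reads
\[
r(x)m(x)\equiv\sum_{t=0}^{s-1}\varepsilon_t x^{\,s-1-t+d}+\tfrac1x P_s(x)\pmod{n\Z[x]}.
\]
The $\varepsilon$-terms occupy exponents $d,\dots,d+s-1$, while $\tfrac1x P_s$ occupies exponents $\le d-1$; since $d\le j<i\le s-1+d$, the prescribed right-hand side $x^i-x^j$ lies entirely in the high range. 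Comparing coefficients forces $P_s=0$ and forces every $\varepsilon_t$ into $\{0,1,-1\}$, with a single $+1$ at the position of $x^i$ strictly preceding a single $-1$ at the position of $x^j$. Declaring stage $A$ up to the $+1$, stage $B$ in between, and stage $C$ after the $-1$ turns the run into a genuine path from $(0,\dots,0;A)$ to $(0,\dots,0;C)$.

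I expect the main obstacle to be exactly this last verification: one must check that the forward-defined coefficients $\varepsilon_t$ are genuinely forced into $\{0,\pm1\}$ in the required order, and that the run ends at the zero state in stage $C$. This is where the normalization $j\ge d$ and the degree separation between the $\varepsilon$-part and $\tfrac1x P_s$ do the real work; once the shift register is set up with the zero-constant-term invariant, the telescoping identity itself is purely formal.
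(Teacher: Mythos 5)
Your proposal is correct, and it actually proves more than the paper's own proof does. For the implication ``path $\Rightarrow$ representation'' and the ``moreover'' clause, your argument coincides with the paper's: your uniform edge relation $P_t+c_tm(x)\equiv\varepsilon_tx^d+\tfrac1xP_{t+1}(x)\pmod{n\Z[x]}$ is exactly the paper's relation~\eqref{eq:cesta}, and multiplying by $x^{s-1-t}$ and telescoping via $P_0=P_s=0$ is the same computation (your right-hand exponents $s-1-t+d$ are in fact the correct ones; the paper's displayed sum $\sum_k x^{s-k-1}\alpha_k$ drops the factor $x^d$, a harmless slip since the statement leaves $i,j$ unspecified). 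The genuine difference is the converse. The paper's proof stops after the forward direction: it shows a path yields $\beta^i-\beta^j\in n\Z[\beta]$, hence $\tfrac1n\in\Per_\A(\beta)$ by Proposition~\ref{prop:main}, but never establishes that $\tfrac1n\in\Per_\A(\beta)$ forces the existence of a path; that half is only hinted at as being ``in the background'' of Theorem~\ref{theo:coprime}, whose proof covers just the coprime case where every state has a unique successor and predecessor. Your converse fills this gap, and the two ingredients that make it work are exactly the right ones: the normalization $d\le j<i\le\deg r+d$ (multiply by a power of $x$, and observe that the $x^i$-coefficient of $r(x)m(x)$ must be nonzero modulo $n$), which guarantees that the forced $+1$ and $-1$ land inside the run of length $s=\deg r+1$; and the degree separation between the $\varepsilon$-block (exponents $\ge d$) and $\tfrac1xP_s$ (exponents $<d$), which forces $P_s=0$ and $\varepsilon_t\equiv0,\pm1$ in the required order, so the forward run is a genuine path ending at $(0,\dots,0;C)$. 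One caveat, inherited from the paper rather than introduced by you: Proposition~\ref{prop:main} is stated for real $\beta>1$ and its ``periodic $\Rightarrow$ algebraic'' direction tacitly assumes digits in $\Z[\beta]$, so the equivalence for a literal arbitrary $\A\subset\C$ is no better justified in your write-up than in the paper's; your parenthetical flagging of this when you invoke the proposition for complex $\beta$ is the appropriate way to handle it.
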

\begin{proof}
Let $$(z^{(0)},\gamma^{(0)})\xrightarrow{c_0}(z^{(1)},\gamma^{(1)})\xrightarrow{c_1}\dots\xrightarrow{c_{s-1}}(z^{(s)},\gamma^{(s)}),$$
where $z^{(k)}=(z^{(k)}_d,\dots,z^{(k)}_1)$, $z^{(0)}=z^{(s)}=(0,\dots,0)$ and $\gamma^{(0)}=A,\gamma^{(s)}=C$ be a path in $G(m,n)$.

According to the definition of $G(m,n)$ we have that
\begin{equation}\label{eq:cesta}
c_{k}m(x)\equiv\alpha_k x^d - \sum_{i=1}^dz_i^{(k)}x^i+\sum_{i=1}^dz_i^{(k+1)}x^{i-1}\quad\pmod {n\Z[x]},
\end{equation}
where
$$
\alpha_k=\begin{cases} 1&\text{if } \gamma^{(k)}=A, \gamma^{(k+1)}=B,\\ -1&\text{if } \gamma^{(k)}=B, \gamma^{(k+1)}=C, \\ 0&\text{otherwise.}\end{cases}
$$

Then by multiplying~\eqref{eq:cesta} by $x^{s-k-1}$, and summing for each $k=0,\dots,s-1$, we obtain
$$m(x)\sum_{k=0}^{n-1}x^{s-k-1}c_k\equiv\sum_{k=0}^{s-1}x^{s-k-1}\alpha_k\quad\pmod {n\Z[x]}.$$
The right side is of this form because $z^{(0)}=z^{(s)}=(0,\dots,0)$, and the rest of the summands cancel out.

Thus we have $c(x)m(x)=x^i-x^j+np(x)$ with $c(x),p(x)\in\Z[x]$, $i=s-k_1-1$ and $j=s-{k_2}-1$ if $(z^{(k_1)},A)\xrightarrow{k_1}(z^{(k_1+1)},B)$ and $(z^{(k_2)},B)\xrightarrow{k_2}(z^{(k_2+1)},C).$ Hence $x^i-x^j\equiv-np(x)\pmod{\Z[x]/m(x)}$, and using the isomorphism $\Z[x]/m(x)\rightarrow\Z[\beta]$ we obtain $\beta^i-\beta^j=-np(\beta)\in n\Z[\beta].$
\end{proof}

For a given base $\beta$, set $\A$ such that $(\beta,\A)$ allows parallel addition. Then computing an eventually periodic $(\beta,\A)$-representation of $x:=\tfrac zn\in\Q(\beta)$, $z\in\Z[\beta]$ can be done by the following steps:
\begin{enumerate}
  \item construct the graph $G(m,n)$, and find $i,j\in \Z$, and $z\in\Z[\beta]$ such that $\beta^i-\beta^j = nz$ using Theorem~\ref{thm:algo};
  \item  construct an eventually periodic $(\beta,\A)$-representation of $\tfrac1n$ as
    \[
	\frac 1n = -\frac {\widetilde z}{\beta^j}\sum_{k=0}^\infty{\beta^{-k(i-j)}}
\]
(see the proof of Proposition~\ref{prop:main});
  \item use a parallel addition algorithm (see~\cite{FrPeSv}) to reduce the digits of the eventually periodic $(\beta,\A)$-representation 
\[x = -\frac {z\widetilde z}{\beta^j}\sum_{k=0}^\infty{\beta^{-k(i-j)}}\]
into the digit alphabet $\A$.
\end{enumerate}
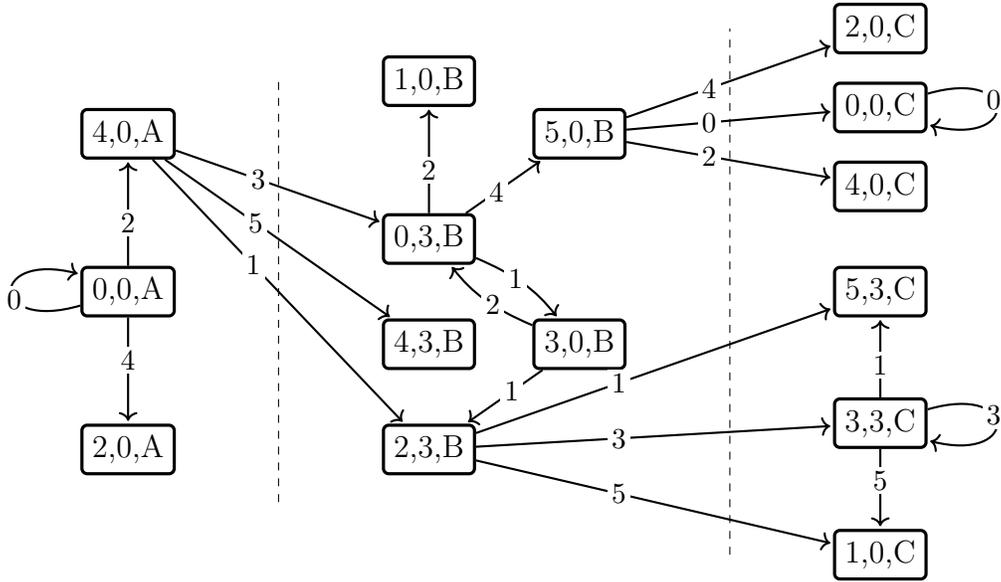
\begin{figure}[h]
\tikzset{
	nn/.style = {rectangle, draw, rounded corners=2pt, very thick},
	nnA/.style = {nn, fill=yellow},
	nnB/.style = {nn, fill=green},
	nnC/.style = {nn, fill=cyan},
	ee/.style = {thick,->,shorten >=1pt},
	e1/.style = {in=160, out=200, min distance=1.3cm},
	e2/.style = {in=340, out=20, min distance=1.3cm},
	e3/.style = {bend left=15},
}
\begin{tikzpicture}[yscale=0.7]
\foreach \a/\b/\c/\x/\y in {%
	0/0/A/-4/0, 4/0/A/-4/3, 2/0/A/-4/-3,
	2/3/B/0/-3, 4/3/B/0/-1, 0/3/B/0/1, 1/0/B/0/4, 5/0/B/2/3, 3/0/B/2/-1,
	1/0/C/6/-5, 3/3/C/6/-2.5, 5/3/C/6/0, 4/0/C/6/2, 0/0/C/6/3.5, 2/0/C/6/5%
	} {
	\node[nn] (\a\b\c) at (\x,\y) {\a,\b,\c};
}
\foreach \abc/\aabbcc/\l/\s in {%
	00A/40A/2/,
	00A/00A/0/e1,
	00C/00C/0/e2,
	03B/30B/1/e3,
	00A/20A/4/,
	40A/03B/3/,
	40A/43B/5/,
	03B/10B/2/,
	40A/23B/1/,
	03B/50B/4/,
	30B/23B/1/,
	50B/20C/4/,
	50B/00C/0/,
	50B/40C/2/,
	23B/53C/1/,
	23B/33C/3/,
	23B/10C/5/,
	30B/03B/2/e3,
	33C/10C/5/,
	33C/33C/3/e2,
	33C/53C/1/%
	}{
	\draw (\abc) edge[ee, \s] node[pos=0.4,fill=white,font=\small,inner sep=1.5pt] {\l} (\aabbcc);
}
\draw [dashed] (-2,-4) -- (-2, 4) (4, -5) -- (4, 5);
\end{tikzpicture}
\caption{The graph $G(m,n)$ for $m(x) = 3x^2+2x+3,\ n=6$ as in Example~\ref{ex:algo1}.}
\end{figure}

\bibliographystyle{plain}
\bibliography{Biblio.bib}
\end{document}